\newcommand{\Z}{\mathbb Z}
\newcommand{\Q}{\mathbb Q}
\newcommand{\F}{\mathbb F}
\newtheorem{tm}{Theorem}
\theoremstyle{remark}
\newtheorem{rem}[tm]{\bf Remark}
\DeclareMathOperator{\GL}{GL}
\newcommand{\fp}{\mathfrak{p}}
\newcommand{\cE}{\mathcal{E}}
\begin{document}

\title{Criteria for $p$-ordinarity of families of elliptic curves over infinitely many number fields}

\author{Nuno Freitas}
\address{Mathematisches Institut, Universitat Bayreuth, 95440 Bayreuth, Germany}
\email{nunobfreitas@gmail.com}

\thanks{During part of this project the first author was supported by a grant from Fundaci\'o Ferran Sunyer i Balaguer.\\The second author was supported by the SPP 1489 priority program of the Deutsche Forschungsgemeinschaft.}

\author{Panagiotis Tsaknias}
\address{
                Department of Mathematics, University of Luxembourg,
                Campus Kirchberg, 6 rue Richard Coudenhove-Kalergi,
                L-1359 Luxembourg}
\email{panagiotis.tsaknias@uni.lu}
\email{p.tsaknias@gmail.com}

\keywords{ordinary elliptic curve; Frey curve; elliptic curves over number fields;}
\subjclass[2010]{11G05, 11D41}

\date{}

\begin{abstract} Let $K_i$ be a number field for all $i \in \Z_{> 0}$ and let $\cE$ be a family of
elliptic curves containing infinitely many members defined over $K_i$ for all $i$. Fix a rational prime $p$. We give sufficient conditions
for the existence of an integer $i_0$ such that, for all $i > i_0$ and all elliptic curve $E \in \cE$ having
good reduction at all $\fp \mid p$ in $K_i$, we have that $E$ has good ordinary reduction at all primes $\fp \mid p$.

We illustrate our criteria by applying it to certain Frey curves in \cite{F} attached to Fermat-type equations of signature $(r,r,p)$.
\end{abstract}

\maketitle

\section{Introduction}

Fix $p$ a rational prime. Let $K$ be a number field and for a prime $\fp \mid p$ write $f_\fp$ for its residual degree. Given an elliptic curve $E/K$ with good reduction at $\fp$ we know that its trace of Frobenius at $\fp$ is given by the quatity
\begin{equation}
a_\fp(E) := (p^{f_\fp} + 1) - \# \tilde{E}(\F_{p^{f_\fp}}), 
\label{traces}
\end{equation}
where $\tilde{E}$ is the reduction of $E$ modulo $\fp$. 

\bigskip

Let $E / K$ be an elliptic curve given by a Weierstrass model with good reduction at all $\fp \mid p$ in $K$. It is simple to decide whether $E$ is $p$-ordinary (i.e. has good ordinary reduction at all $\fp \mid p$). Indeed, for each $\fp \mid p$, compute $a_\fp(E)$ and check if $p \nmid a_\fp(E)$. If the previous holds for all $\fp \mid p$ then $E$ is $p$-ordinary.

Now let $(E_{\alpha} / K)_{\alpha \in \Z^n}$ be a family of elliptic curves given by their Weierstrass models. Suppose that $E_\alpha$ has good reduction at all $\fp \mid p$ for all $\alpha$. 
 Suppose further that $a_\fp(E_\alpha) = a_\fp(E_{\beta})$ for $\alpha,\beta$ whose entries are congruent modulo $p$. We are interested in deciding whether $E_\alpha$ is $p$-ordinary for all $\alpha$. This is also simple since, using formula \eqref{traces}, we only need to compute $a_\fp(E_{\alpha})$ for all the $\alpha$ that are different modulo $p$, and all $\fp \mid p$ in $K$. Then, if $p$ does not divide any of the previous values it follows that $E_\alpha$ is $p$-ordinary for all $\alpha$.

A natural generalization is to consider the same question without the assumption that the $E_\alpha$ are all defined over the same field $K$. In this note we approach this question. Indeed, we will describe sufficient conditions (see Theorem~\ref{Maintm}) to establish $p$-ordinarity of a family of elliptic curves defined over varying fields.

\bigskip

A natural source of infinite families of elliptic curves is the application of the modular method to equations of Fermat-type $Ax^p + By^r = Cz^q$. Indeed, for certain particular cases of the previous equation, it is possible to attach to a solution $(a,b,c) \in \Z^3$ a Frey elliptic curve $E_{(a,b,c)}$ given by a Weierstrass model depending on $a,b,c$. This generates an infinite family of elliptic curves. Moreover, in \cite{F} this method is applied to infinitely many equations generating a family of elliptic curves defined over varying fields. In section~3 below, we will use this family from \cite{F} to illustrate our main result.

\section{Main Theorem}

For the rest of this section we fix a rational prime $p$. For every $i\in\Z_{>0}$ let $K_i:=\Q(z_i)$ be a number field. Let $A$ be an indexing set. 
Consider a family of elliptic curves 
\[
 \mathcal{E} :=\{ E_{\alpha, i} \, : \, \alpha \in A, \, i \in \Z_{>0} \}
\]
where $E_{\alpha, i}$ is an elliptic curve defined over $K_i$  for all $\alpha \in A$  and all $i\in \Z_{>0}$. For a prime $\fp \mid p$ in $K_i$ let $E_{\alpha,i,\fp} / K_i$ be a $\fp$-minimal Weierstrass model for $E_{\alpha, i}$. Write $c_4(E_{\alpha,i,\fp})$ and $\Delta(E_{\alpha,i,\fp})$
for the usual invariants attached to the model $E_{\alpha,i,\fp}$.

\bigskip

For every $i>0$ let $A_{i}\subseteq A$ be the set of $\alpha$  for which $E_{\alpha,i}$ has good reduction at all $\fp \mid p$ in $K_i$.
Suppose further that for all $i>0$, all $\alpha\in A_{i}$ and all $\fp$ of $K_i$ above $p$ there exist polynomials $C_{\alpha,i,\fp}, D_{\alpha,i,\fp}\in L_{\alpha,i,\fp}[X]$, where $L_{\alpha,i,\fp}$ is a subfield of $K_i$ such that:
\begin{itemize}
\item $C_{\alpha,i,\fp}(z_i) = c_4(E_{\alpha,i,\fp}) \quad \text{ and } \quad  D_{\alpha,i,\fp}(z_i) = \Delta(E_{\alpha,i,\fp})$.
\item For each of the fields $ L_{\alpha,i,\fp}$ there is a unique prime $\mathfrak{q}_{\alpha, i,\fp}$ below $\fp$. Let $f_{\alpha,i,\fp}$ be its residual degree. Then the set $\{f_{\alpha,i,\fp} \}$ is bounded. This implies that there exists a finite extension $\F$ of $\F_p$ such that $\mathcal{O}_{L_{\alpha, i, \fp}}/\mathfrak{q}_{\alpha, i, \fp}$ is a subfield of $\F$ for all fields $L_{\alpha, i, \fp}$.
\item  $C_{\alpha,i,\fp}, D_{\alpha,i,\fp}\in (\mathcal{O}_{L_{\alpha, i, \fp}})_{\mathfrak{q}_{\alpha, i, \fp}}[X]$.
\item The sets $\{\deg \overline{C}_{\alpha,i,\fp}\}\text{ and }\{\deg \overline{D}_{\alpha,i,\fp}\}$ are bounded. Here $\overline{C}_{\alpha,i,\fp}$ and $\overline{D}_{\alpha,i,\fp}$ denote the reductions of $C_{\alpha,i,\fp}$ and $D_{\alpha,i,\fp}$ modulo $\mathfrak{q}_{\alpha, i, \fp}$. In particular we can assume that they are all in $\F[X]$, where $\F$ is the finite extension of $\F_p$ mentioned above.
\end{itemize}

Our main theorem is then the following:
\begin{tm}\label{Maintm}
Let $p$ be a rational prime and $(K_i)_{i \in \Z_{>0}}$ and $\mathcal{E}$ be as above. For each $\fp \mid p$ in $K_i$ let $f^i_\fp$ be the corresponding residual degree. Write $f_i$ for the minimum of the $f^i_\fp$. Suppose that
\[
 \lim_i f_i =+\infty.
\]
Then, there exists a positive integer $i_0$ such that for all $i > i_0$ and all $\alpha\in A_{i}$ the elliptic curves $E_{\alpha,i}$ are ordinary at all primes $\mathfrak{p} \mid p$ in $K_i$.
\end {tm}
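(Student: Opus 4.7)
The plan is to argue by contradiction. Suppose the conclusion fails: then there are indices $i_1<i_2<\dots$ together with $\alpha_n\in A_{i_n}$ and primes $\fp_n\mid p$ of $K_{i_n}$ such that each $E_{\alpha_n,i_n}$ is supersingular at $\fp_n$. A classical fact in characteristic $p$ asserts that the $j$-invariant of any supersingular elliptic curve lies in $\F_{p^2}$ and that only finitely many supersingular $j$-invariants exist. Enlarging $\F$ if necessary so that $\F_{p^2}\subseteq \F$, the reductions $\bar j_n:=j(\widetilde{E}_{\alpha_n,i_n,\fp_n})$ thus range over a fixed finite subset $S\subset\F$.

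Using the identity $j=c_4^3/\Delta$ together with the given polynomial descriptions $C_{\alpha,i,\fp}(z_i)=c_4$ and $D_{\alpha,i,\fp}(z_i)=\Delta$, reduction modulo $\fp_n$ yields
\[
\overline{C}_{\alpha_n,i_n,\fp_n}(\bar z_{i_n})^3-\bar j_n\cdot\overline{D}_{\alpha_n,i_n,\fp_n}(\bar z_{i_n})=0
\]
in the residue field at $\fp_n$; note $\overline{D}(\bar z_{i_n})\ne 0$ because good reduction gives $p\nmid\Delta$. The polynomial $P_n(X):=\overline{C}_{\alpha_n,i_n,\fp_n}(X)^3-\bar j_n\,\overline{D}_{\alpha_n,i_n,\fp_n}(X)\in\F[X]$ has degree uniformly bounded by some $d_0$, from the hypotheses bounding $\deg\overline{C}$ and $\deg\overline{D}$. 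Since $\F$ is finite, the set of such pairs $(\overline{C},\overline{D})\in\F[X]^2$ is finite, and $\bar j_n\in S$ is finite, so $P_n$ ranges over a fixed finite collection of polynomials over $\F$. Excluding the degenerate possibility $P_n\equiv 0$ (which would force $\overline{C}^3\equiv\bar j\,\overline{D}$ as polynomials), the root $\bar z_{i_n}$ lies in a fixed finite subset $T\subset\overline{\F}_p$ independent of $n$.

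From $|T|<\infty$ one extracts an absolute upper bound $M$ on $[\F_p(\bar z_{i_n}):\F_p]$. The final step is to convert this into a bound on the residue degree $f^{i_n}_{\fp_n}$: since $K_{i_n}=\Q(z_{i_n})$, when $\bar z_{i_n}$ generates the residue field $\OO_{K_{i_n}}/\fp_n$ over $\F_p$ (which holds, for example, whenever $p\nmid[\OO_{K_{i_n}}:\Z[z_{i_n}]]$), we get $f^{i_n}_{\fp_n}=[\F_p(\bar z_{i_n}):\F_p]\le M$, and combined with $f_{i_n}\le f^{i_n}_{\fp_n}$ this contradicts $f_{i_n}\to\infty$. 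The main obstacle is precisely this last transition: bridging the gap between the degree of $\bar z_{i_n}$ over $\F_p$ and the true residue degree $f^{i_n}_{\fp_n}$, which in full generality may require either a careful choice of the primitive element $z_{i_n}$ or a mild extra hypothesis on the family. The degenerate possibility $P_n\equiv 0$ is a secondary technicality that must be ruled out separately, either by genericity of the polynomials $C_{\alpha,i,\fp},D_{\alpha,i,\fp}$ or by noting that it forces a specific algebraic relation between $\overline{C}$ and $\overline{D}$ that can only hold for finitely many configurations in the family.
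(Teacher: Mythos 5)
Your argument is essentially the paper's proof run in the contrapositive: finitely many supersingular $j$-invariants, all lying in $\F_{p^2}$, so supersingularity at $\fp$ forces $\overline{C}_{\alpha,i,\fp}(\overline{z}_i)^3-b\,\overline{D}_{\alpha,i,\fp}(\overline{z}_i)=0$ for a polynomial of uniformly bounded degree over the fixed finite field $\F$, whence $\overline{z}_i$ has bounded degree over $\F_p$ and the residue degree is bounded, contradicting $f_i\to\infty$. The two weak points you flag --- that one needs $\overline{z}_i$ to generate the residue field $\OO_{K_i}/\fp$ for the bound on $[\F_p(\overline{z}_i):\F_p]$ to control $f^i_\fp$, and that the degenerate case $\overline{C}^3\equiv b\,\overline{D}$ must be excluded (it would produce identically supersingular members of the family) --- are genuine, but the paper's own proof passes over both in silence, so your proposal is no less complete than the original.
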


\begin{proof}

Fix an algebraic closure $\overline{\F}_p$ of $\F_p$ 
and denote by $B_p$ the set of all the supersingular $j$-invariants modulo $p$ which is well known to be finite (see for example \cite[Chapter V, Theorem 4.1]{sil1}).
Moreover, from \cite[Chapter V, Theorem 3.1 ]{sil1} we have $B_p \subset \F_{p^2}\subseteq\overline{\F}_p$. Let $E$ be an elliptic curve over a number field $K$, having good reduction at (all primes above) $p$. 
For a prime $\fp \mid p$ in $K$ we write $\overline{j(E)}$ for $j(E) \pmod{\fp}$ seen as an element of $\overline{\F}_p$. Then, $\overline{j(E)} \neq b$ for all $b \in B_p$
implies that $E$ is ordinary at $\fp$. 

For the rest of the proof we fix a prime $\mathfrak{p}$ over $p$ for each $K_i$. Thus, we have that for $\alpha \in A_{i}$ the curve $E_{\alpha,i}$ is ordinary at $\fp$ if
\[
\overline{j(E_{\alpha,i})}=\frac{\overline{c_4(E_{\alpha,i,\fp})}^3}{\overline{\Delta(E_{\alpha,i,\fp})}} = \frac{\overline{C}_{\alpha,i,\fp}(\overline{z_i})^3}{\overline{D}_{\alpha,i,\fp}(\overline{z_i})}\neq b 
\]
for all $b\in B_p$, where $\overline{z}_i:=z_i \pmod{\fp}$ seen as an element of $\overline{\F}_p$.

\bigskip


Set
\begin{equation}
d:=\max_{\substack{b\in B_p\\ \alpha\in \cup_{i>0} A_{i}\\ \fp|p}}\{\deg(\overline{C}_{\alpha,i,\fp}(X)^3-b\overline{D}_{\alpha,i,\fp}(X))\}, 
\label{eqn:deg}
\end{equation}

Without loss of generality we may assume that $\F_{p^2}$ is contaned in $\F$. By assumption $d$ is finite, so there is a constant $i_0$ such that $f_i > cd$ 
for all $i > i_0$, where $c=[\F:\F_p]$. Suppose now that $E_{\alpha,j,\fp}$ over $K_j$ satisfies 
\[
\overline{C}_{\alpha,j,\fp}(\overline{z}_j)^3-b\overline{D}_{\alpha,j,\fp}(\overline{z}_j) = 0
\]
Then, the residual degree $f_{\fp}^j$ of $K_j$ at $\fp$ is at most $cd$, hence $j \leq i_0$. Thus, for all $i > i_0$, all $b\in B_p$ we have that
\[
\overline{C}_{\alpha,i,\fp}(\overline{z}_i)^3-b\overline{D}_{\alpha,i,\fp}(\overline{z}_i) \neq 0 \quad \Leftrightarrow \quad \frac{\overline{C}_{\alpha,i,\fp}(\overline{z}_i)^3}{\overline{D}_{\alpha,i,\fp}(\overline{z}_i)}\neq b.
\]
for any choice of $\fp$ in $K_i$ above $p$ and therefore we conclude that for all $i > i_0$, the curve $E_{\alpha,i}$ is ordinary at $p$ for all $\alpha \in A_{i}$.

\end{proof}

\begin{rem}\label{RemExplicit}
We can obtain an even smaller $d$ and therefore $i_0$ if we let $d$ be the maximum among the degrees of the irreducible factors of the polynomials $\overline{C}_\alpha(X)^3-b\overline{D}_\alpha(X)$ over $\F$.
If one has an explicit enough description of the residual degrees for the fields $K_i$ one can turn this in to an algorithm for explicitly computing $i_0$. This will be illustrated in the example below (see Theorem \ref{NunoFrey}).
\end{rem}

\section{Application}

First let us remark that the sequence of fields $K_r:=\Q(\zeta_r)$, indexed by rational primes $r$, with $\zeta_r$ an $r$-th primitive root of unity, satisfy the conditions of the Main Theorem. 
One family of elliptic curves that satisfies the hypotheses of our Main Theorem consists of the generalized Frey curves constructed in \cite{F}. We also give a description here for convenience:

Let $k=(k_1,k_2,k_3)$. Then for any pair $(a,b)\in \Z^2\backslash\{(0,0\}$ we can define an elliptic curve
$$E^k_{(a,b),r}: Y^2 = X(X-A^k(a,b,\zeta_r))(X+B^k(a,b,\zeta_r)).$$
Here
$$A^k(a,b,\zeta_r) := (\zeta_r^{k_3} + \zeta_r^{-k_3} - \zeta_r^{k_2} - \zeta_r^{-k_2})(a^2 + (\zeta_r^{k_1}+\zeta_r^{-k_1})ab + b^2),$$
$$B^k(a,b,\zeta_r) := (\zeta_r^{k_1} + \zeta_r^{-k_1} - \zeta_r^{k_3} - \zeta_r^{-k_3})(a^2 + (\zeta_r^{k_2}+\zeta_r^{-k_2})ab + b^2) \textrm{ and}$$
$$C^k(a,b,\zeta_r) := - A^k(a,b,\zeta_r) - B^k(a,b,\zeta_r).$$
In what follows we will refer to them as simply $A, B, C$.
\begin{tm}\label{NunoFrey}\label{Freycurve}
Assume $k=(1,2,3)$ or $(1,2,4)$. 
Then, $E^k_{(a,b),r}$ is $3$-ordinary for all primes $r>7$ and all non-zero pairs $(a,b)\in \Z^2$.
\end{tm}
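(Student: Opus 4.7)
The plan is to invoke Theorem \ref{Maintm} with $p=3$, the sequence $K_r = \Q(\zeta_r)$ indexed by primes $r$ (already shown at the start of this section to satisfy $\lim_r f_r = +\infty$, since $\mathrm{ord}_r(3) \to \infty$ as $r \to \infty$ along primes), and the family $\cE = \{E^k_{(a,b),r}\}$. Writing $\alpha_j := \zeta_r^j + \zeta_r^{-j}$ and $z := \zeta_r + \zeta_r^{-1}$, the three-term recursion $\alpha_j = z\alpha_{j-1} - \alpha_{j-2}$ (with $\alpha_0 = 2$, $\alpha_1 = z$) expresses each $\alpha_j$ as a polynomial in $z$ of degree $j$. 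Consequently the invariants $A$, $B$, $C = -A-B$, and therefore
\[
c_4 \;=\; 16(A^2+AB+B^2), \qquad \Delta \;=\; 16(ABC)^2,
\]
are polynomials in $z$ with $\Z[a,b]$-coefficients whose degree in $z$ is bounded uniformly in $r$ by a constant depending only on $k$. Taking $L_{(a,b),r,\fp} = \Q(z) = \Q(\zeta_r)^+$ verifies the bullet-hypotheses of the main theorem.

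For $p = 3$ the set of supersingular $j$-invariants in $\overline{\F}_3$ is $B_3 = \{0\}$, so the ordinarity condition at $\fp \mid 3$ in $K_r$ reduces to $\overline{c_4}(\overline{z}) \neq 0$ in $\overline{\F}_3$, where $\overline{c_4}(X) \in \F_3[X]$ is the mod-$3$ reduction of the polynomial representing $c_4$ as a function of $z$. Applying Remark \ref{RemExplicit}, the relevant quantity is the maximum $d$ of the degrees of irreducible factors of $\overline{c_4}(X)$ over $\F_9$, as $(a,b)$ ranges over the eight nonzero residue classes modulo $3$ for which the Frey curve has good reduction.

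The core of the proof is then an explicit calculation: for each $k \in \{(1,2,3),(1,2,4)\}$ and each residue class $(a,b) \bmod 3$, one expands $\overline{c_4}(X) \in \F_3[X]$ from the Frey formulas via the recursion for $\alpha_j$, factors it over $\F_9$, and reads off the degrees of the irreducible factors. The threshold $r > 7$ then emerges from combining these factorizations with the values of $f_r = \mathrm{ord}_r(3)$ for small primes ($f_5 = 4$, $f_7 = 6$, $f_{11} = 5$, $f_{13} = 3$, $\ldots$); the excluded primes $r = 5, 7$ are boundary cases where either the Frey curve degenerates --- for instance, for $k=(1,2,3)$ and $r = 5$ one has $\zeta_5^3 = \zeta_5^{-2}$, so $\alpha_3 = \alpha_2$ and $A = 0$ --- or a coincidence of minimal polynomials occurs.

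The main obstacle is that one cannot simply compare $f_r$ to $d$ and require $f_r > 2d$, because $f_r$ is non-monotone in $r$ (for instance $f_{13} = 3 < f_{11} = 5$). Instead, one must use the sharper form of Remark \ref{RemExplicit}: for each $r > 7$, one verifies that the minimal polynomial of $\overline{z}_r$ over $\F_3$ does not divide $\overline{c_4}(X)$, equivalently no irreducible factor of $\overline{c_4}$ of degree $f_r$ coincides with it. This bookkeeping has to be carried out for each combination of $k$, $(a,b) \bmod 3$, and the finitely many problematic small primes; the explicit polynomial form of the Frey invariants supplied by \cite{F} is what makes this tractable and pins down the exact threshold $r > 7$.
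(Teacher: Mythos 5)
Your proposal follows the same skeleton as the paper's proof: verify the hypotheses of Theorem~\ref{Maintm} for the real cyclotomic fields and the Frey curves, note that $B_3=\{0\}$ so that (given good reduction) $3$-ordinarity at $\fp$ is exactly the condition $c_4\not\equiv 0\pmod{\fp}$, bound degrees uniformly in $r$, and finish by inspecting the finitely many primes $r$ whose residue degree at $3$ is small. Where you genuinely diverge is in the decisive computation. You propose to reduce and factor $\overline{c_4}(X)$ itself, a polynomial of degree up to roughly $10$ in $X=\zeta_r+\zeta_r^{-1}$. The paper instead lowers the degree first: using $A+B+C=0$ it rewrites the condition as $A^2+B^2+C^2\not\equiv 0$, finds explicit $u,v,w$ with $A=v-w$, $B=w-u$, $C=u-v$, and then uses $u^3+v^3+w^3\equiv(u+v+w)^3\pmod{\fp}$ together with the identity $u^3+v^3+w^3=\tfrac12(u+v+w)\bigl[(v-w)^2+(w-u)^2+(u-v)^2\bigr]+3uvw$ to show that $u+v+w\not\equiv 0\pmod{\fp}$ \emph{suffices}. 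This replaces $\overline{c_4}$ by a degree-$5$ polynomial $h$ whose irreducible factors over $\F_3$ have degree at most $3$, so only $r\in\{7,11,13\}$ survive as exceptional; the price is that the criterion is only one-directional, so those three primes must still be checked by hand (via norms of $u+v+w$). Your route trades this two-stage check for an exact criterion ($c_4\equiv 0$ iff supersingular) applied to a higher-degree polynomial, hence a priori more exceptional primes but no further verification once the factorizations are in hand. Both are viable; your observation that the threshold $7$ cannot be read off from a single inequality $f_r>cd$ because $f_r$ is non-monotone (e.g.\ $f_{13}<f_{11}$) is correct and is exactly why the paper, too, enumerates and tests the small-residue-degree primes individually.

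The one substantive gap is that the computation which actually produces the constant $7$ is described but not performed: you do not exhibit the factorizations of $\overline{c_4}(X)$ for the residue classes of $(a,b)$ and the two triples $k$, nor the verification at the resulting exceptional primes, so the claim that $r>7$ is the correct threshold (and in particular that $r=7$ genuinely fails, which the paper confirms by finding $\Nm(u+v+w)=0$ there) remains asserted rather than established. Since the theorem's content \emph{is} that explicit constant, this finite computation must be carried out for the argument to be complete; to your credit, the paper itself only writes out one of the cases and declares the rest analogous.
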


\begin{proof}[{\bf Proof of Theorem \ref{Freycurve}:}]

One needs to check that the fields $K_r$ and the families of elliptic curves $E_{(a,b),r}$ satisfy indeed the hypotheses of Theorem \ref{Maintm}:
\begin{itemize}
\item The fields $K_r$  and $\Q(\zeta_r)$ are Galois and therefore the residue class degrees at 3 are all equal to the minimum. Write $f_r$ and $g_r$ for the residue class degree at 3 of $K_r$  and $\Q(\zeta_r)$, respectively. One has (see for example \cite[p. 35]{milneCFT}) that $g_r$ is the smallest positive integer $g$ such that $r|3^g-1$. This clearly implies that $\lim_r g_r=+\infty$. Since $g_r$ is equal to $f_r$ or $2f_r$ one has the corresponding property for the fields $K_r$ as well.
\item $K_r=\Q(\xi_r)$ where $\xi_r=\zeta_r+\zeta_r^{-1}$. The model (described in \cite[Section 2.3]{F}) for each curve $E_{(a,b),r}$ is given by an equation for which $c_4(E_{(a,b),r}) = 2^4(AB + BC + AC)$ and $\Delta(E_{(a,b),r})=2^4(ABC)^2$ where $A$, $B$ and $C$ are the aforementioned polynomials evaluated at $a, b, \xi_r$ and therefore the same holds for $c_4$ and $\Delta$. It is also clear from the expressions that they actually lie in $\Z_{(3)}[a,b, \xi_r]$. We therefore have that for fixed (integer) parameters $a,b$ the parameters $c_4$ and $\Delta$ are in indeed given  by  polynomials in $C_{(a,b)}, D_{(a,b)} \in \Z_{(3)}[X]$ evaluated at $\xi_r$.
\item The boundedness condition on the degrees of $\overline{C}_{(a,b)}$ and $\overline{D}_{(a,b)}$ as we let $a$ and $b$ vary is also evident from the fact that $C_{(a,b)}(X), D_{(a,b)}(X)\in \Z_{(3)}[a,b][X]$; varying $a$ and $b$ matters only up to reduction mod $3$.
\item Good reduction for the curves with $a\textrm{ or }b\not\equiv 0 \pmod{3}$ at primes above $3$ is proven in \cite[Proposition 3.2]{F}. In other words, $A_{i}=\Z^2\backslash(3\Z)^2$ for all $i$.
\end{itemize}
Theorem \ref{Maintm} thus implies that there is a constant $r_0$ such that for all $r>r_0$ all the curves are ordinary at all primes above $3$. Our goal now is to make this constant explicit, i.e. show that $r_0=7$. We proceed as outlined in Remark \ref{RemExplicit}. From here onwards $\mathfrak{p}$ will denote a prime above $3$.

For $p=3$ we have that $B_3=\{0\}\subseteq\F_3$. Thus one needs to check that $c_4^3\not\equiv 0 \mod \fp$ or equivalently that $c_4\not\equiv 0 \mod \fp$. The last one is true if and only if 
\begin{equation}
AB + BC + AC \not\equiv 0 \pmod{\mathfrak{p}}.
\label{prodsumcong}
\end{equation}
Since $A+B+C=0$, using the identity
$$
 (A+B+C)^2 = A^2 + B^2 + C^2 + 2(AB + AC + BC)
$$
we get that conguence \eqref{prodsumcong} is equivalent to 
\begin{equation}
A^2+B^2+C^2\not\equiv0\pmod{\mathfrak{p}}.
\label{squarescong}  
\end{equation}
Notice that $AB + BC + AC\pmod{\mathfrak{p}}$ depends only on $(a,b)\pmod{3}$ so we will assume from now on that $(a,b)\in \F_3^2\backslash\{(0,0)\}$. Furthermore, by the symmetry of $A$, $B$, $C$, it is enough to consider only the cases where $(a,b) \in \{(1,0),(1,1),(1,2)\}$.
Assume for now (which is going to be true for the cases we will consider) that we can find $u,v,w$ such that 
\begin{equation}
A=v-w, \quad B=w-u, \quad C=u-v.
\label{dif}
\end{equation}
Then congruence (\ref{squarescong}) is equivalent to 
\begin{equation}
(v-w)^2+(w-u)^2+(u-v)^2\not\equiv0\pmod{\mathfrak{p}}.
\label{squaresdifcong} 
\end{equation}
Since $\mathfrak{p} \mid 3$ we have that $u^3 + v^3 + w^3 \equiv (u+v+w)^3 \pmod{\mathfrak{p}}$ and therefore 
\begin{equation}
u+v+w\not\equiv0\pmod{\mathfrak{p}},
\label{sumcong} 
\end{equation}
is equivalent to $u^3+v^3+w^3\not\equiv0\pmod{\mathfrak{p}}$. Furthermore, using the identity 
$$u^3+v^3+w^3=\frac{1}{2}(u+v+w)\left[(w-v)^2 + (u-w)^2 + (v-u)^2\right]+3uvw,$$
we see that congruence \eqref{sumcong} implies congruence \eqref{squaresdifcong}. The values of $u$, $v$ and $w$ in each of the three cases for $(a,b)$ are:
\begin{itemize}
\item {\bf The case $(a,b)=(1,0)$.}
In this case we have
$$A = \xi_{k_3} - \xi_{k_2},\quad B = \xi_{k_1} - \xi_{k_3},\quad C = \xi_{k_2} - \xi_{k_1}$$
and it is trivial to see that
$$u=\xi_{k_1}, \quad v=\xi_{k_2},  \quad w=\xi_{k_3}.$$
\item {\bf The case $(a,b)=(1,1)$.}
In this case we have
$$A = (\xi_{k_3} - \xi_{k_2})(2 + \xi_{k_1}),\quad B = (\xi_{k_1} - \xi_{k_3})(2 + \xi_{k_2}),\quad C = (\xi_{k_2} - \xi_{k_1})(2 + \xi_{k_3})$$
and it is easy to see that
$$u=\xi_{k_2}\xi_{k_3} -2\xi_{k_1}, \quad v = \xi_{k_1}\xi_{k_3} -2\xi_{k_2}, \quad w=\xi_{k_1}\xi_{k_2} -2\xi_{k_3}.$$
\item {\bf The case $(a,b)=(1,2)$.}
In this case we have
$$ A = (\xi_{k_3} - \xi_{k_2})(5 + 2\xi_{k_1}),\quad B = (\xi_{k_1} - \xi_{k_3})(5 + 2\xi_{k_2}),\quad C = (\xi_{k_2} - \xi_{k_1})(5 + 2\xi_{k_3})$$
and it is easy to see that
$$u=2\xi_{k_2}\xi_{k_3} -5\xi_{k_1}, \quad v = 2\xi_{k_1}\xi_{k_3} -5\xi_{k_2}, \quad  w = 2\xi_{k_1}\xi_{k_2} -5\xi_{k_3}.$$
\end{itemize}
It is easy to see that one can write $u+v+w$ as $h(\xi_1)$ with $h(X)\in \Z[X]$ using the identities:
$$\xi_k=\xi_1^k - \sum_{j=1}^{\lfloor k/2\rfloor}\binom{k}{j}\xi_{k-2j}\quad{\rm for\ } k {\rm\ odd\ and}$$
$$\xi_k=\xi_1^k - \sum_{j=1}^{k/2-1}\binom{k}{j}\xi_{k-2j}-\binom{k}{k/2}\quad{\rm for\ } k {\rm\ even}.$$
Notice that the degree of $h$ depends on the triple $(k_1,k_2,k_3)$ and $(a,b)$ but not on $r$.

Assume now that congruence \eqref{sumcong} is not true, i.e. that
$h(\xi_i)\equiv0\pmod{\mathfrak{p}}$. Then $g(\zeta_r)\equiv0\pmod{\mathfrak{p}}$ where $g(X) \in \Z[X]$ is the polynomial $\smash{X^{\deg(h)}h(X+1/X)}$, of degree $d=2\deg(h)$, still independent of $r$. This implies that the extension $\F_3[\overline{\zeta_r}]/\F_3$ is of degree at most $d$.

The relation $r|3^f-1$ implies that, for a fixed $f$, there are only finitely many $r$, easily explicitly determined, such that the residue class degree is (at most) $f$. To finish things, we just have to examine what happens at these exceptional $r$. We will do this for $(a,b)=(1,1)$ and $(k_1,k_2,k_3)=(1,2,3)$: In this case $h$ is of degree $5$ and it factors in $\F_3[X]$ as
$$(1+X)(2+X)(2+X+X^2+X^3).$$
The only primes $r\geq7$ for which the extension $\F_3[\zeta_r]/\F_3$ is of degree at most $6$ are $7$, $11$ and $13$. One then verifies computationally for these primes that the Frey curve is indeed $3$-ordinary, except for $7$. We again look at (the prime divisors of) the norm of $u+v+w$: 
\begin{itemize}
\item $r=7$: The norm is $0$.
\item $r=11$: The norm is $11^2$.
\item $r=13$: The norm is $13^2$.
\end{itemize}

The other cases are treated the same way and it turns out that $r>7$ is the sufficient condition for both triples.
\end{proof}

\section*{Acknowledgments}

The authors would like to thank Sara Arias de Reyna and Samir Siksek for useful suggestions. The first author would like to thank Gabor Wiese and the University of Luxembourg for the pleasant stay there, during which part of this work was done. Both authors would like to thank Fred Diamond and King's College London for the same reasons.

\end{document}